\newcommand{\beq}{\begin{equation}}
\newcommand{\eeq}{\end{equation}}
\newcommand{\beqs}{\begin{equation*}}
\newcommand{\eeqs}{\end{equation*}}
\newcommand{\ba}{\begin{array}}
\newcommand{\ea}{\end{array}}
\newcommand{\beas}{\begin{eqnarray*}}
\newcommand{\eeas}{\end{eqnarray*}}
\newcommand{\bea}{\begin{eqnarray}}
\newcommand{\eea}{\end{eqnarray}}
\newcommand{\bal}{\begin{align}}
\newcommand{\eal}{\end{align}}
\newcommand{\bals}{\begin{align*}}
\newcommand{\eals}{\end{align*}}
\newcommand{\R}{\ensuremath{\mathbb R}}
\newcommand{\N}{\ensuremath{\mathbb N}}
\newcommand{\bds}{\begin{displaystyle}}
\newcommand{\eds}{\end{displaystyle}}
\def\eqdef{\stackrel{\rm def}{=}}
\def\varep{\varepsilon}
\def\d{{\rm d}}
\newcommand{\bigo}{\mathcal O}
\newtheorem{theorem}{Theorem}[section]
\newtheorem{lemma}[theorem]{Lemma}
\newtheorem{proposition}[theorem]{Proposition}
\newtheorem{definition}[theorem]{Definition}
\newtheorem*{notation}{Notation}
\theoremstyle{remark}
\newtheorem{remark}[theorem]{\bf{Remark}}
\numberwithin{equation}{section}
\title[Asymptotic expansions for the Lagrangian trajectories]{Asymptotic expansions for the Lagrangian trajectories from solutions of the Navier--Stokes equations}
\author{Luan Hoang}
\address{Department of Mathematics and Statistics, Texas Tech University,  1108 Memorial Circle, Lubbock, TX 79409-1042, U.S.A.}
\email{luan.hoang@ttu.edu}
\dedicatory{\large Dedicated to the memory of Ciprian Foias (1933--2020)}
\date{\today}
\begin{document}
%%%%%%%%%%%%%%%%%%%%%%%%%%%%%%%%%%%%%%%
\begin{abstract}
Consider any Leray--Hopf weak solution of the three-dimensional Navier--Stokes equations for incompressible, viscous fluid flows. We prove that any Lagrangian trajectory associated with such a velocity field has an asymptotic expansion, as time tends to infinity, which describes its long-time behavior very precisely.
\end{abstract}

\maketitle

%\tableofcontents
%%%%%%%%%%%%%%%%%%%%%%%%%%%%%%%%%%%%%%%%
\section{Introduction}\label{intro}
We study the long-time dynamics of the incompressible, viscous fluid flows in the three-dimensional space. 
Theoretically speaking, there are two standard descriptions of fluid flows. One is the Lagrangian that is based on the trajectory $x(t)\in\R^3$ of each initial fluid particle (or material point) $x_0=x(0)$, where $t$ is the time variable. The other is the Eulerian which uses the velocity field $u(x,t)$ and pressure $p(x,t)$, where $x\in\R^3$ is the independent spatial variable representing each fixed position in the fluid. The relation between the two descriptions is the following ordinary differential equations (ODE)
\beq\label{xuode}
 x'=u(x,t).
\eeq
The solutions $x(t)$ of \eqref{xuode} are called the Lagrangian trajectories.

The Eulerian description turns out to be simpler for deriving the set of equations that govern the fluid flows. 
They are called the  Navier--Stokes equations (NSE),
\beq \label{NPDE}
\left \{
\begin{aligned}
 u_t-\nu \Delta u +(u\cdot\nabla )u &=-\nabla p, \\
 {\rm div}\, u &=0.
\end{aligned}
\right.
\eeq 
where $\nu>0$ is the kinematic viscosity, and the unknowns are the velocity $u(x,t)$ and pressure $p(x,t)$. For a solution $(u,p)$, we will conveniently say $u(x,t)$  is a solution of \eqref{NPDE}.

The system \eqref{NPDE} is subject to the initial condition $u(x,0)=u_0(x)$, where $u_0$ is a given initial vector field.

From the mathematical point of view, the NSE is a system of nonlinear partial differential equations, and its understanding is still lacking. Even the basic question about its existence and uniqueness has not been answered completely.
Because of this lack of information about the velocity $u(x,t)$ in \eqref{xuode}, the analysis of the Lagrangian trajectories $x(t)$ is very limited. There have been  results for  the Lagrangian trajectories  in small time intervals. See recent work \cite{CL2019,CKV2016,CVW2015,Sueur2011,GK2019,BF2017,Hernandez2019} and references therein for short-time well-posedness, regularity, and analyticity, based on solutions of the  Euler or Navier--Stokes related systems. See also \cite{MWbook2005} for studies of the topological structures of the flows in the two-dimensional case. 
Naturally, the long-term behavior of the Lagrangian trajectories is even lesser-known.

However, thanks to the remarkable result by C.~Foias and J.-C.~Saut \cite{FS87}, the long-time behavior a solution of the NSE, under the current consideration, can be described completely, bypassing its yet unknown global well-posedness. 
This is part of the Foias--Saut theory of asymptotic expansions and their associated normal form and nonlinear spectral  manifolds for the NSE. See their work \cite{FS84a,FS84b,FS87,FS91}, which were developed further or extended in \cite{FHOZ1,FHOZ2,FHS1,HM1,HM2,CaH1,CaH2,HTi1,Shi2000,Minea}.
(The interested reader is referred to \cite{FHS2} for a brief survey on the subject.)  
The goal of this paper is to investigate \eqref{xuode} in this direction in order to gain knowledge of precise long-time dynamics of fluid flows in the Lagrangian description.

First, we recall the type of asymptotic expansions, as time tends to infinity, studied here as well as in previous work.

\begin{definition}\label{Sexpand}
Let $(X,\|\cdot\|_X)$ be a normed space and  $(\alpha_n)_{n=1}^\infty$ be a  sequence of strictly increasing non-negative real numbers. 
A function $f:[T,\infty)\to X$, for some $T\ge 0$, is said to have an asymptotic expansion
\beq\label{ex1}
f(t)\sim\sum_{n=1}^\infty f_n(t)e^{-\alpha_n t} \quad\text{in } X,
\eeq 
where each $f_n:\R\to X$ is a polynomial, if one has, for any $N\ge 1$, that
\beq\label{ex2}
\Big \|f(t)-\sum_{n=1}^N f_n(t)e^{-\alpha_n t}\Big \|_X=\bigo(e^{-(\alpha_N+\varepsilon_N)t})\text{ as }t\to\infty,
\eeq
for some $\varepsilon_N>0$.
\end{definition}

Clearly, if \eqref{ex2} holds for some $N\in\N=\{1,2,3,\ldots\}$ and some polynomials $f_n$'s, for $n=1,\ldots,N$, then those $f_n$'s are unique.
Consequently, the polynomials $f_n$'s, for all $n\in\N$, in Definition \ref{Sexpand} are unique.
Moreover, in the case $X$ is finite dimensional, all of its norms are equivalent, hence, the expansion \eqref{ex1} is the same for any norm on $X$.
Also, in many cases, the norm $\|\cdot\|_X$ is a standard and well-known one, hence will be implicitly understood.

\medskip
Returning to the NSE, it was proved by Foias and Saut \cite{FS87} that any solution $u(x,t)$ of the NSE processes an asymptotic expansion of type \eqref{ex1}. Our goal is to establish the same result for solutions of \eqref{xuode}, where $u(x,t)$ is a Leray--Hopf weak solution of the NSE \eqref{NPDE}. Indeed, we prove in Theorems \ref{Thm1} and \ref{Thm3} that when $u(x,t)$ satisfies the no-slip boundary condition, or is a spatially periodic solution with zero average, then system \eqref{xuode} has a solution $x(t)$, for sufficiently large $t$, which admits an asymptotic expansion in $\R^3$. The starting point is a simple realization in Proposition \ref{yzerolem} that each trajectory $x(t)$ converges exponentially, as $t\to\infty$. The general case of spatially periodic solutions  is treated in section \ref{sec3}.

Our obtained results give very precise long-time dynamics for the Lagrangian trajectories for general weak solutions of the NSE.
They contrast with the papers cited above which only yield short-time properties. Moreover, our approach draws strong conclusions with  relatively simple proofs.

The rest of this section is focused on preliminaries. We consider the NSE \eqref{NPDE} in one of the following two specified situations.

\textbf{Dirichlet boundary condition (DBC).} 
Let $\Omega$ be an bounded, open, connected set in $\R^3$ with $C^\infty$ boundary. We consider \eqref{NPDE} in $\Omega\times(0,\infty)$ with the boundary condition $u=0$ on $\partial \Omega\times(0,\infty)$.

\textbf{Spatial periodicity condition (SPC).} Fix a vector $\mathbf L=(L_1,L_2,L_3)\in (0,\infty)^3$. We consider \eqref{NPDE} in $\R^3\times(0,\infty)$ with $u(\cdot,t)$ and $p(\cdot,t)$ being $\mathbf L$-periodic for $t>0$.

Here, a function $g$ defined on $\R^3$ is called $\mathbf L$-periodic if
\beqs
g(x+L_i e_i)=g(x) \text{ for $i=1,2,3$ and all $x\in \R^3$,}
\eeqs
where $\{e_1,e_2,e_3\}$ is the standard canonical basis of $\R^3$.

Define domain $\Omega=(0,L_1)\times(0,L_2)\times (0,L_3)$ in this case. A function $g$ is said to have zero average over $\Omega$ if
\beqs
\int_\Omega g(x) \d x =0.
\eeqs

We recall some needed basic elements from the theory of the NSE. For details, the reader is referred to the books \cite{CFBook, TemamAMSbook, TemamSIAMbook, FMRTBook}.
Below, $H^m=W^{m,2}$, for $m\in\N$, denote the standard Sobolev spaces. 

In the (DBC) case, let $\mathcal V$ be the set of divergence-free vector fields in  $C_c^\infty(\Omega)^3$.
Define $\mathcal X$ to be the set of functions in $\bigcap_{m=1}^\infty H^m(\Omega)^3$ that are divergence-free and vanish on the boundary $\partial \Omega$, and denote $\Omega^*=\bar \Omega$. Let $\mathbb L^2(\Omega)=L^2(\Omega)^3$ and $\mathbb H^m(\Omega)=H^m(\Omega)^3$. 

In the (SPC) case, let $\mathcal{V}$ be the set of $\mathbf L$-periodic trigonometric polynomial vector fields on $\R^3$ which are divergence-free and  have zero average over $\Omega$. 
Define $\mathcal X=\mathcal V$, and denote $\Omega^*=\R^3$.
Let $\mathbb L^2(\Omega)$ (respectively, $\mathbb H^m(\Omega)$) be the space of $\mathbf L$-periodic vector fields on $\R^3$ that belong to $L_{\rm loc}^2(\R^3)^3$ (respectively, $H_{\rm loc}^m(\R^3)^3$), and is endowed with the inner product and norm of  $L^2(\Omega)^3$ (respectively, $H^m(\Omega)^3$).

In both cases, define space $H$ (respectively, $V$) to be the closure of $\mathcal V$ in $\mathbb L^2(\Omega)$ (respectively, $\mathbb H^1(\Omega)$).
The Leray projection $\mathbb P$ is the orthogonal projection from $\mathbb L^2(\Omega)$ to $H$.
The Stokes operator is $(-\mathbb P\Delta)$ defined on $V\cap \mathbb H^2(\Omega)$.

Denote the spectrum of Stokes operator by $\{ \Lambda_k:k\in\N \}$, where $\Lambda_k$'s are positive, strictly increasing to infinity.
Let $\mathcal S$ be the additive semigroup generated by $\nu\Lambda_k$'s, that is, 
\beqs
\mathcal S=\Big\{\nu\sum_{j=1}^N \Lambda_{k_j}:N,k_1,\ldots,k_N\in\N \Big\}.
\eeqs

We arrange the set $\mathcal S$ as a sequence $(\mu_n)_{n=1}^\infty$ of positive, strictly increasing numbers.
Clearly,  
\beq\label{limu} 
\lim_{n\to\infty} \mu_n=\infty,
\eeq
\beq\label{semiS}
\mu_n+\mu_k\in \mathcal S\quad\forall n,k\in\N.
\eeq

For convenience, we will write $f(t)=g(t)+\bigo(h(t))$ to indicate
$$|f(t)-g(t)|=\bigo(h(t))\text{ as }t\to\infty.$$  

\section{Main results}\label{sec2}

For any $u_0\in H$, there exists a Leray--Hopf weak solution $u(x,t)$ of \eqref{NPDE} on $[0,\infty)$ with initial condition $u(x,0)=u_0(x)$. By its eventual regularity, there is $T\ge 0$ such that
$u\in C^\infty(\Omega^*\times [T,\infty))$ and satisfies the corresponding (DBC) or (SPC).

\textbf{(A)} \emph{Throughout this section, let us fix such a Leray--Hopf weak solution $u(x,t)$ and a Lagrangian trajectory 
$x(t)\in C^1([T,\infty),\Omega)$ in the (DBC) case, or $x(t)\in C^1([T,\infty),\R^3)$ in the (SPC) case. 
}

A discussion about assumption (A) is given in Remark \ref{inrmk} below.

It is proved in   \cite{FS87} that the solution $u(x,t)$ has an asymptotic expansion, in the sense of Definition \ref{Sexpand},
\beq \label{expand}
u(\cdot,t) \sim \sum_{n=1}^\infty q_n(\cdot,t)e^{-\mu_n t}\text{ in }\mathbb H^m(\Omega),
\eeq
for any $m\in\N$, where $q_j(\cdot,t)$'s are polynomials in $t$ 
with values in $\mathcal X\subset C^\infty(\Omega^*)^3$.

One can write each polynomial $q_n(x,t)$, for $n\ge 1$, explicitly as
\beq\label{qx}
q_n(x,t)=\sum_{k=0}^{d_n} t^k q_{n,k}(x),\text{ where }d_n\ge 0,\text{ and } q_{n,k}\in\mathcal X.
\eeq

In fact, $q_1(x,t)$ is independent of $t$, hence we write
\beq\label{q1ind}
q_1(x,t)=q_1(x)\in \mathcal X. 
\eeq

According to the expansion \eqref{expand} with $m=2$ and Definition \ref{Sexpand}, we have 
\beqs
\Big\|u(\cdot,t)- \sum_{n=1}^N q_n(\cdot,t)e^{-\mu_n t}\Big\|_{H^2(\Omega)^3}=\bigo(e^{-(\mu_N+\delta_N)t}),
\eeqs
for any $N\in\N$, and some $\delta_N>0$.

By Morrey's embedding theorem, it follows that
\beq\label{usup}
\sup_{x\in\Omega^*}\Big|u(x,t)- \sum_{n=1}^N q_n(x,t)e^{-\mu_n t}\Big|=\bigo(e^{-(\mu_N+\delta_N)t}).
\eeq

In particular, letting $N=1$ in \eqref{usup} and using the fact \eqref{q1ind}, we infer 
\beqs 
\sup_{x\in\Omega^*}|u(x,t)|\le \sup_{x\in\Omega^*}|q_1(x)| e^{-\mu_1 t}+\bigo(e^{-(\mu_1+\delta_1)t})=\bigo(e^{-\mu_1 t}).
\eeqs 
Therefore, there is $C_0>0$ such that 
\beq\label{udec}
\sup_{x\in\Omega^*}|u(x,t)|\le C_0e^{-\mu_1 t}\text{ for all } t\ge T.
\eeq

Taking $x=x(t)$ in \eqref{usup} and \eqref{udec}, one has
\beq\label{ut}
\Big |u(x(t),t)- \sum_{n=1}^N q_n(x(t),t)e^{-\mu_n t}\Big |=\bigo(e^{-(\mu_N+\delta_N)t}),
\eeq
\beq\label{utt}
|u(x(t),t)|\le C_0e^{-\mu_1 t}\text{ for all } t\ge T.
\eeq
 
\begin{proposition}\label{yzerolem}
The limit $x_*\eqdef \lim_{t\to\infty}x(t)$  exists and belongs to $\Omega^*$,
and  
 \beq\label{er0}
|x(t)-x_*|=\bigo(e^{-\mu_1 t}).
\eeq
\end{proposition}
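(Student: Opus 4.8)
The starting point is the defining ODE \eqref{xuode}, which along the fixed trajectory reads $x'(t)=u(x(t),t)$, combined with the pointwise decay estimate \eqref{utt}, namely $|u(x(t),t)|\le C_0 e^{-\mu_1 t}$ for all $t\ge T$. The plan is to exploit the fact that $|x'(t)|$ is thereby bounded by an exponentially decaying, hence integrable, function of $t$, and to deduce convergence of $x(t)$ together with the stated rate directly from a Cauchy estimate. No fixed-point or compactness machinery is needed; the entire argument is a one-line integration once the velocity bound is in hand.

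Concretely, for any $T\le t_1<t_2$ I would write $x(t_2)-x(t_1)=\int_{t_1}^{t_2}x'(s)\,\d s=\int_{t_1}^{t_2}u(x(s),s)\,\d s$ and estimate
\beqs
|x(t_2)-x(t_1)|\le \int_{t_1}^{t_2}|u(x(s),s)|\,\d s\le C_0\int_{t_1}^{t_2}e^{-\mu_1 s}\,\d s\le \frac{C_0}{\mu_1}e^{-\mu_1 t_1}.
\eeqs
Since the right-hand side tends to $0$ as $t_1\to\infty$, the function $x(t)$ satisfies the Cauchy criterion at infinity, so the limit $x_*\eqdef\lim_{t\to\infty}x(t)$ exists in $\R^3$. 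Letting $t_2\to\infty$ with $t_1=t$ in the same inequality yields $|x(t)-x_*|\le \tfrac{C_0}{\mu_1}e^{-\mu_1 t}$, which is exactly the claimed bound \eqref{er0}.

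It remains to verify $x_*\in\Omega^*$, and this is where the two cases must be distinguished, though neither presents real difficulty. In the (SPC) case $\Omega^*=\R^3$, so membership is automatic. In the (DBC) case the trajectory satisfies $x(t)\in\Omega$ for all $t\ge T$ by assumption (A), and since $\Omega^*=\bar\Omega$ is closed, the limit of points of $\Omega$ necessarily lies in $\bar\Omega=\Omega^*$. The only point requiring any care—and the closest thing to an obstacle—is justifying that the trajectory cannot fail to be defined or escape the domain before the limit is reached; but this is already guaranteed by the standing hypothesis (A), which posits $x(t)$ as a $C^1$ trajectory on the whole interval $[T,\infty)$ with values in the appropriate domain. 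Thus the decay estimate \eqref{utt} is the sole analytic input, and the proposition follows.
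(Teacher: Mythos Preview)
Your proof is correct and follows essentially the same route as the paper: integrate $x'(t)=u(x(t),t)$ over $[t,\infty)$ using the decay bound \eqref{utt} to obtain both the existence of $x_*$ and the rate \eqref{er0}, then observe that $x_*\in\Omega^*$ by closedness. The only cosmetic difference is that the paper writes the improper integral $x_*=x(T)+\int_T^\infty u(x(\tau),\tau)\,\d\tau$ directly rather than invoking the Cauchy criterion.
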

\begin{proof}
%Fix $T\ge T_0$ such that the solution $x(t)$ exists on $[T,\infty)$. 
For $t\ge T$, we have
\beq\label{xt}
x(t)=x(T)+\int_T^t u(x(\tau),\tau)\d\tau.
\eeq

It follows \eqref{xt} and estimate \eqref{utt} that
\beq\label{xstar}
x_*=\lim_{t\to\infty}x(t)=x(T)+\int_T^\infty u(x(\tau),\tau)\d\tau \text{ which exists in } \R^3.
\eeq
Obviously, $x_*\in\Omega^*$. 
By \eqref{xt}, \eqref{xstar}, and \eqref{utt} again, we obtain, for $t\ge T$, 
\beqs
|x(t)-x_*|= \Big|\int_t^\infty u(x(\tau),\tau)\d\tau\Big|\le \int_t^\infty C_0e^{-\mu_1\tau}\d\tau =C_0\mu_1^{-1}e^{-\mu_1 t},
\eeqs
which proves \eqref{er0}.
\end{proof}

\medskip
\begin{notation}
For $x\in \R^3$, denote $x^{(0)}=1$, and by $x^{(k)}$ the $k$-tuple $(x,\ldots,x)$ for $k\ge 1$.

If $m\in \N$ and $\mathcal L$ is an $m$-linear mapping from $(\R^3)^m$ to $\R^3$, the norm of $\mathcal L$ is defined by
\beqs
\|\mathcal L\|=\max\{ |\mathcal L(y_1,y_2,\ldots,y_m)|:y_j\in\R^3,|y_j|=1,\text{ for } 1\le j\le m\}.
\eeqs 
\end{notation}

It is known that the norm $\|\mathcal L\|$ belongs to  $[0,\infty)$, and one has 
\beq\label{multiL}
|\mathcal L(y_1,y_2,\ldots,y_m)|\le \|\mathcal L\|\cdot |y_1|\cdot |y_2|\cdots |y_m|
\quad \forall y_1,y_2,\ldots,y_m\in\R^n.
\eeq

Below are consequences of expansion \eqref{expand} and Proposition \ref{yzerolem}.
Let $x_*$ be as in Proposition \ref{yzerolem}. 

\textbf{Consideration I:} The (SPC) case, or $x_*\in\Omega$ in the (DBC) case. 

\textbf{Consideration II:}  The (DBC) case with $x_*\in\partial \Omega$. 

We focus on Consideration I first.
By using the Taylor expansion for each $q_{n,k}(x)$, see e.g. \cite[Chapter XVI, \S 6]{LangI1968}, we obtain, for any $s\ge 0$, 
\beq\label{qTay}
q_{n,k}(x)=\sum_{m=0}^s \frac1{m!}D_x^m q_{n,k}(x_*)(x-x_*)^{(m)} + g_{n,k,s}(x),
\eeq
where  $D_x^m q_{n,k}$ denotes the $m$-th order derivative of $q_{n,k}$, 
and $g_{n,k,s}\in C(\Omega^*)^3$ satisfying
\beq\label{gx}
g_{n,k,s}(x)=\bigo(|x-x_*|^{s+1}) \text{ as }x\to x_*.
\eeq
Here, $D_x^m q_{n,k}$ is $q_{n,k}$ for $m=0$, and is an $m$-linear mapping from $(\R^3)^m$ to $\R^3$, for $m\ge 1$.

Substituting \eqref{qTay} into \eqref{qx} gives
\beqs
q_n(x,t)=\sum_{k=0}^{d_n} t^k \Big[ \sum_{m=0}^s \frac1{m!}D_x^m q_{n,k}(x_*)(x-x_*)^{(m)} + g_{n,k,s}(x)\Big],
\eeqs
which can be rewritten as
\beq\label{qnxap}
q_n(x,t)=\sum_{m=0}^s \mathcal Q_{n,m}(x_*,t)(x-x_*)^{(m)}+\sum_{k=0}^{d_n} t^k g_{n,k,s}(x),
\eeq
where 
\beq\label{Qnm}
\mathcal Q_{n,m}(x_*,t)=\sum_{k=0}^{d_n}  \frac{t^k}{m!}D_x^m q_{n,k}(x_*)=\frac{1}{m!}D_x^m q_n(x_*,t).
\eeq 

In particular,  
\beq\label{QD}
\mathcal Q_{n,0}(x_*,t)=q_n(x_*,t),\quad \mathcal Q_{n,1}(x_*,t)=D_x q_n(x_*,t),\quad \mathcal Q_{n,2}(x_*,t)=\frac12D_x^2 q_n(x_*,t).
\eeq

Note from \eqref{Qnm} that  $\mathcal Q_{n,m}(x_*,t)$ is a polynomial in $t$ valued in the space of $m$-linear mappings from $(\R^3)^m$ to $\R^3$. Therefore, one has, for any $k\ge 1$ and $m\ge 0$,  
\beq\label{Qze2}
\|\mathcal Q_{k,m}(x_*,t)\|=\bigo(e^{\delta t})\quad \forall\delta>0.
\eeq

Denote $z(t)=x(t)-x_*$. Then \eqref{er0} reads as
\beq\label{zest} 
|z(t)|=\bigo(e^{-\mu_1 t}).
\eeq 

Combining \eqref{qnxap} for $x=x(t)$ with \eqref{gx} and \eqref{zest} yields 
\beqs
q_n(x(t),t)=\sum_{m=0}^s \mathcal Q_{n,m}(x_*,t)z(t)^{(m)}+\sum_{k=0}^{d_n} t^k \bigo(e^{-\mu_1(s+1)t}),
\eeqs
thus 
\beq\label{qntap}
q_n(x(t),t)=\sum_{m=0}^s \mathcal Q_{n,m}(x_*,t)z(t)^{(m)}+\bigo(e^{-(\mu_1(s+1)-\delta)t}) \quad \forall \delta>0.
\eeq

Our main result is the following.

\begin{theorem}\label{Thm1}
Under Consideration I, there exist polynomials $\zeta_n:\R\to\R^3$, for $n\ge 0$, such that solution $x(t)$ has an asymptotic expansion, in the sense of Definition \ref{Sexpand},
 \beq\label{xex}
 x(t)\sim x_*+\sum_{n=1}^\infty \zeta_n(t) e^{-\mu_n t} \text{ in } \R^3,
 \eeq
where each $\zeta_n$, for $n\ge 1$,  is the unique polynomial solution of the following differential equation
\beq\label{zeneq}
\zeta_n'(t)-\mu_n\zeta_n(t) = \sum_{\mu_k+\mu_{j_1}+\mu_{j_2}+\ldots+\mu_{j_m}=\mu_n} \mathcal Q_{k,m}(x_*,t) (\zeta_{j_1}(t),\ldots,\zeta_{j_m}(t)). 
\eeq
for all  $t\in\R$. More explicitly, $\zeta_n(t)$ can be calculated recursively by formula \eqref{zenx} below.
\end{theorem}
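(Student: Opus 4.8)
The plan is to proceed in two stages. First I would construct the polynomials $\zeta_n$ formally from the recursion \eqref{zeneq} and check that the recursion closes; then I would justify the expansion \eqref{xex} by an induction on $N$ that converts the ODE $z'(t)=u(x(t),t)$, with $z(t)=x(t)-x_*$, into the family \eqref{zeneq}.

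\emph{Construction of the $\zeta_n$.} On the right-hand side of \eqref{zeneq} every multi-index obeys $\mu_k+\mu_{j_1}+\cdots+\mu_{j_m}=\mu_n$ with all the $\mu$'s positive; hence $\mu_k\le\mu_n$, and when $m\ge1$ each $\mu_{j_i}<\mu_n$, so $j_i\le n-1$. The unique term with $m=0$ forces $k=n$, and by \eqref{QD} it equals the known polynomial $q_n(x_*,t)$. Thus the right-hand side of \eqref{zeneq} is a fixed polynomial $p_n(t)$ built only from the known coefficients $\mathcal Q_{k,m}(x_*,t)$ and from $\zeta_1,\dots,\zeta_{n-1}$ determined at earlier steps. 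Since $\mu_n\neq0$, the map $\zeta\mapsto\zeta'-\mu_n\zeta$ is a degree-preserving bijection on polynomials, so $\zeta_n'-\mu_n\zeta_n=p_n$ has a unique polynomial solution, computed by the explicit recursion stated in the theorem; for instance $\zeta_1=-q_1(x_*)/\mu_1$ is constant. This defines every $\zeta_n$ unambiguously.

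\emph{The error estimate.} Writing $Z_N(t)=\sum_{n=1}^N\zeta_n(t)e^{-\mu_n t}$, I will prove by induction on $N$ that $|z(t)-Z_N(t)|=\bigo(e^{-(\mu_N+\varepsilon_N)t})$ for some $\varepsilon_N>0$, which is exactly \eqref{ex2} for $x(t)-x_*$. The case $N=1$ follows from \eqref{zest}. For the step $N-1\to N$ (with $N\ge2$), I start from $z'(t)=u(x(t),t)$, insert \eqref{ut} with $N$ terms and the Taylor expansion \eqref{qntap} with $s$ chosen so that $\mu_1(s+1)>\mu_N$, obtaining
\beq
z'(t)=\sum_{k=1}^N\sum_{m=0}^s \mathcal Q_{k,m}(x_*,t)\,z(t)^{(m)}e^{-\mu_k t}+\bigo(e^{-(\mu_N+\varepsilon)t}).
\eeq
Then I substitute the induction hypothesis $z=Z_{N-1}+w_{N-1}$, with $|w_{N-1}|=\bigo(e^{-(\mu_{N-1}+\varepsilon)t})$, into each factor of $z(t)^{(m)}$ and expand by multilinearity.

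The exponent bookkeeping is the crux. Any product containing a factor $w_{N-1}$ is, bounding the remaining factors by \eqref{zest} and the coefficients by \eqref{Qze2}, at worst of size $\bigo(e^{-(\mu_{N-1}+\mu_1+\varepsilon-\delta)t})$; since $\mu_{N-1}+\mu_1\in\mathcal S$ exceeds $\mu_{N-1}$ we have $\mu_{N-1}+\mu_1\ge\mu_N$, so taking $\delta<\varepsilon$ forces all such terms to be $\bigo(e^{-(\mu_N+\varepsilon')t})$. The purely $Z_{N-1}$ contributions expand into a sum of $\mathcal Q_{k,m}(x_*,t)(\zeta_{j_1},\dots,\zeta_{j_m})e^{-(\mu_k+\mu_{j_1}+\cdots+\mu_{j_m})t}$; those with total exponent $>\mu_N$ drop below $e^{-\mu_N t}$ after again shrinking $\delta$, while those with exponent equal to some $\mu_n\le\mu_N$ reassemble, by the index analysis of the first stage, into precisely $(\zeta_n'-\mu_n\zeta_n)e^{-\mu_n t}$. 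Collecting everything gives $z'(t)=Z_N'(t)+\bigo(e^{-(\mu_N+\varepsilon')t})$; since $z(t)\to0$ and $Z_N(t)\to0$, integrating from $t$ to $\infty$ yields $z(t)-Z_N(t)=-\int_t^\infty (z'-Z_N')\,\d\tau=\bigo(e^{-(\mu_N+\varepsilon')t})$, closing the induction. The main obstacle is exactly this accounting: one must verify that only indices below $N$ enter each order, that $s$ and the growth loss $\delta$ can always be chosen to keep every remainder strictly below $e^{-\mu_N t}$, and that the surviving terms match \eqref{zeneq} term by term. Uniqueness of the $\zeta_n$ then follows from the general uniqueness noted after Definition \ref{Sexpand}.
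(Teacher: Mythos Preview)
Your overall strategy matches the paper's: both proofs expand $z'(t)=u(x(t),t)$ via the Foias--Saut expansion \eqref{ut} composed with the Taylor expansion \eqref{qntap}, then run an induction on $N$ in which the exponent bookkeeping (using $\mu_{N-1}+\mu_1\ge\mu_N$ from the semigroup property \eqref{semiS}) pushes every cross term below $e^{-\mu_N t}$ and the surviving resonant terms rebuild the right-hand side of \eqref{zeneq}. The one methodological difference is the closing step. The paper introduces $w_N(t)=e^{\mu_{N+1}t}\tilde z_N(t)$, derives $w_N'-\mu_{N+1}w_N=J_{N+1}+\bigo(e^{-\varepsilon t})$, and invokes Lemma~\ref{polylem} to produce $\zeta_{N+1}$ together with the estimate. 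You instead show $z'-Z_N'=\bigo(e^{-(\mu_N+\varepsilon')t})$ and integrate from $t$ to $\infty$. Your route is a little more elementary since it bypasses the auxiliary lemma; the paper's route has the virtue of \emph{simultaneously} defining $\zeta_{N+1}$ and proving the estimate, whereas you must construct all $\zeta_n$ in advance and then verify that the resonant terms really collapse to $Z_N'$ (this is where the identity \eqref{sumequiv} is doing work in the background).

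There is one slip: the base case $N=1$ does \emph{not} follow from \eqref{zest}. Estimate \eqref{zest} says $|z(t)|=\bigo(e^{-\mu_1 t})$, but what you need is $|z(t)-\zeta_1 e^{-\mu_1 t}|=\bigo(e^{-(\mu_1+\varepsilon_1)t})$ with $\zeta_1=-q_1(x_*)/\mu_1$. The fix is immediate: either run your inductive step once more starting from $N=0$ with the conventions $Z_0=0$, $\mu_0=0$ (so that \eqref{zest} is exactly the input $|z-Z_0|=\bigo(e^{-(\mu_0+\varepsilon)t})$), or treat $N=1$ separately as the paper does by expanding $z'(t)=q_1(x_*)e^{-\mu_1 t}+\bigo(e^{-(\mu_1+\varepsilon)t})$ and integrating. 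Either way the argument goes through, but as written the base case is not justified.
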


Before proving Theorem \ref{Thm1}, we explain the formulas appearing there.

\begin{enumerate}[label={\rm (\alph*)}]
 \item\label{xpa} Formula \eqref{zeneq} is the concise form of the following 
\beq\label{zeo}
\zeta_n'(t)-\mu_n\zeta_n(t) = q_n(x_*,t) + \sum_{\substack{m, k,j_1,\ldots,j_m\ge 1,\\ \mu_k+\mu_{j_1}+\mu_{j_2}+\ldots+\mu_{j_m}=\mu_n}} \mathcal Q_{k,m}(x_*,t) (\zeta_{j_1}(t),\ldots,\zeta_{j_m}(t))
\eeq

Indeed, when $m=0$, the indices $j_1,\ldots,j_m$, numbers $\mu_{j_1},\ldots,\mu_{j_m}$, and functions $\zeta_{j_1}(t),\ldots,\zeta_{j_m}(t)$ are not needed in \eqref{zeneq}, then $\mu_k=\mu_n$, which implies $k=n$, and $\mathcal Q_{k,m}(x_*,t) (\zeta_{j_1}(t),\ldots,\zeta_{j_m}(t))$ is just $q_n(x_*,t)$. 
When $m\ge 1$, the indices $j_1,\ldots,j_m$ are present in \eqref{zeneq} and are in $\N$. 

\item\label{fin} The sum on the right-hand side of \eqref{zeneq} is a finite one.
Indeed, when $m=0$, one has, again, $k=n$.
Consider $m\ge 1$. Since $\mu_k,\mu_{j_1},\ldots,\mu_{j_m}\ge \mu_1$, we have 
$$(1+m)\mu_1\le  \mu_k+\mu_{j_1}+\mu_{j_2}+\ldots+\mu_{j_m}=\mu_n,$$
$$m_k,\mu_{j_1},\ldots,\mu_{j_m}<\mu_n.$$
Thus, $m\le \mu_n/\mu_1-1$ and  $k,j_1,\ldots,j_m\le n-1$. (Observe that $k\le n$ for both cases of $m$.)
Therefore, we can explicitly rewrite \eqref{zeneq}, via \eqref{zeo}, as
\beq\label{zep}
\zeta_n'(t)-\mu_n\zeta_n(t) = q_n(x_*,t) + \sum_{m=1}^{s_n}\sum_{\substack{k,j_1,\ldots,j_m= 1,\\ \mu_k+\mu_{j_1}+\mu_{j_2}+\ldots+\mu_{j_m}=\mu_n}}^{n-1} \mathcal Q_{k,m}(x_*,t) (\zeta_{j_1}(t),\ldots,\zeta_{j_m}(t)),
\eeq
where $s_n=\min\{ s\in \N:s\ge \mu_n/\mu_1-1\}$. 

\item\label{xpn} To give examples, we write equation \eqref{zeneq} explicitly, by using the identities in \eqref{QD},  for $n=1,2,3$ as
\begin{align}
\zeta_1'(t)-\mu_1 \zeta_1(t)&=q_1(x_*),\label{ze1}\\
\zeta_2'(t)-\mu_2 \zeta_2(t)&=D_x q_1(x_*) \zeta_1(t)  + q_2(x_*,t),\notag \\
\zeta_3'(t)-\mu_3 \zeta_3(t)&= \frac12D_x^2q_1(x_*)(\zeta_1(t),\zeta_1(t))+ D_x q_2(x_*,t) \zeta_1(t)  + q_3(x_*,t).\notag
\end{align}

\item Equation \eqref{zeneq} comes from the following approximation lemma. It is the particular Case (iii) of \cite[Lemma 4.2]{HM2}, which essentially originates from Foias--Saut's work \cite{FS87}.
\end{enumerate}

\begin{lemma}\label{polylem}
Let $(X,\|\cdot\|_X)$ be a Banach space.
Let $p:\R\to X$ be a polynomial, and $g\in C([t_*,\infty),X)$, for some $t_*\ge 0$, satisfy 
 \beqs
 \|g(t)\|_X\le Me^{-\delta t} \quad \forall t\ge t_*, \quad \text{for some } M,\delta>0.
 \eeqs

Let $\gamma$ be a positive real number. Suppose  that $y\in C([t_*,\infty),X)\cap C^1((t_*,\infty),X)$ solves the equation
 \beqs
 y'(t)-\gamma y(t) =p(t)+g(t) \quad  \text{for }t > t_*,
 \eeqs
and satisfies
\beq\label{yexpdec}
  \lim_{t\to\infty} (e^{-\gamma t}\|y(t)\|_X)=0.
\eeq
  
Then there exists a unique polynomial $q:\R\to X$ such that 
 \beq\label{gball}
  \|y(t)-q(t)\|_X\le \frac{M}{\gamma+\delta}e^{-\delta t} \quad \text{ for all }t\ge t_*.
  \eeq

More precisely, $q(t)$ is the unique polynomial solution of
\beq\label{pode2}
q'(t)-\gamma q(t) = p(t)\quad \text{for }t\in \R,
\eeq 
and can be explicitly defined by
\beq\label{qdef}        
 q(t)=-\int_t^\infty e^{\gamma(t-\tau)}p(\tau) \d\tau.
\eeq
\end{lemma}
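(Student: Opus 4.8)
\textbf{Plan for proving Lemma \ref{polylem}.}
The statement is a linear first-order scalar-coefficient ODE result in a Banach space, and my plan is to reduce everything to the integrating-factor method and then split the forcing term $p(t)+g(t)$ according to its two qualitatively different pieces. First I would multiply the equation $y'(t)-\gamma y(t)=p(t)+g(t)$ by the integrating factor $e^{-\gamma t}$, rewriting it as $\frac{\d}{\d t}\bigl(e^{-\gamma t}y(t)\bigr)=e^{-\gamma t}\bigl(p(t)+g(t)\bigr)$. The decay hypothesis \eqref{yexpdec} is exactly what lets me integrate this from $t$ to $\infty$ rather than from $t_*$: integrating the derivative gives $-e^{-\gamma t}y(t)=\int_t^\infty e^{-\gamma\tau}\bigl(p(\tau)+g(\tau)\bigr)\d\tau$, provided the improper integral converges, which I would verify below. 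This yields the representation $y(t)=-\int_t^\infty e^{\gamma(t-\tau)}\bigl(p(\tau)+g(\tau)\bigr)\d\tau$.

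Next I would define $q(t)$ by \eqref{qdef}, namely $q(t)=-\int_t^\infty e^{\gamma(t-\tau)}p(\tau)\d\tau$, and check the two claims about it. To see that the integral converges and that $q$ is a polynomial, I would note that $p$ is a polynomial, say of degree $d$, and that $\int_t^\infty e^{\gamma(t-\tau)}\tau^j\d\tau$ can be evaluated by repeated integration by parts (or equivalently recognized via the identity $-\int_t^\infty e^{\gamma(t-\tau)}\tau^j\d\tau = -\gamma^{-1}\sum_{i=0}^{j}\frac{j!}{(j-i)!}\gamma^{-i}t^{j-i}$), producing a polynomial of the same degree $d$; convergence is automatic since $\gamma>0$ makes $e^{\gamma(t-\tau)}$ decay in $\tau$. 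Differentiating under the integral sign, or simply verifying directly, shows that this $q$ solves \eqref{pode2}. Uniqueness of the polynomial solution of \eqref{pode2} follows because the difference of two polynomial solutions would solve the homogeneous equation $r'=\gamma r$, whose only solutions are $ce^{\gamma t}$, and the only polynomial among these is $r\equiv 0$.

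Finally I would estimate $y(t)-q(t)$. Subtracting the two integral representations leaves exactly the $g$-contribution:
\beqs
y(t)-q(t)=-\int_t^\infty e^{\gamma(t-\tau)}g(\tau)\d\tau.
\eeqs
Using the bound $\|g(\tau)\|_X\le Me^{-\delta\tau}$ gives
\beqs
\|y(t)-q(t)\|_X\le \int_t^\infty e^{\gamma(t-\tau)}Me^{-\delta\tau}\d\tau = Me^{\gamma t}\int_t^\infty e^{-(\gamma+\delta)\tau}\d\tau=\frac{M}{\gamma+\delta}e^{-\delta t},
\eeqs
which is precisely \eqref{gball}. This last computation is routine once the right integration limits have been set up, so the only genuinely substantive point is the first one: justifying that one may integrate the integrating-factor identity on $[t,\infty)$ and discard the boundary term at infinity. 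The hypothesis \eqref{yexpdec} is tailored exactly to kill that boundary term, so I would take care to invoke it explicitly when passing to the improper integral and when ruling out an additive $ce^{\gamma t}$ ambiguity in $y$.
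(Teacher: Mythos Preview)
Your proof is correct and complete; the integrating-factor argument with integration on $[t,\infty)$ is exactly the right structure, and you handle the boundary term, the polynomial nature of $q$, uniqueness via the homogeneous equation, and the final estimate cleanly. Note that the paper does not actually prove this lemma in-text: it cites \cite[Lemma 4.2]{HM2} and \cite[Lemma 2.2]{CaH3} and remarks that the case $t_*\ge 0$ follows easily from the case $t_*=0$, so there is no paper-side proof to compare against beyond those references---your argument is the standard one underlying those citations.
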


In fact, the statements in \cite[Lemma 4.2]{HM2} are for $t_*=0$, but they can be easily generalized and proved for any $t_*\ge 0$, see e.g. \cite[Lemma 2.2]{CaH3}.

\begin{enumerate}[label={\rm (\alph*)}]  \setcounter{enumi}{4}
\item\label{xpq} For each $n\in\N$, the right-hand side of \eqref{zeneq} is an $\R^3$-valued polynomial.
Then polynomial solution $\zeta_n$ exists and is unique, see \eqref{pode2} and \eqref{qdef} in Lemma \ref{polylem}.
Explicitly, for $n\ge 1$ and $t\in\R$,
\begin{multline}\label{zenx}
\zeta_n(t)=-\int_t^\infty e^{\mu_n (t-\tau)}\Big\{ q_n(x_*,\tau) \\
 + \sum_{m=1}^{s_n}\sum_{\substack{k,j_1,\ldots,j_m= 1,\\ \mu_k+\mu_{j_1}+\mu_{j_2}+\ldots+\mu_{j_m}=\mu_n}}^{n-1} \mathcal Q_{k,m}(x_*,\tau) (\zeta_{j_1}(\tau),\ldots,\zeta_{j_m}(\tau))\Big\}\d \tau.
\end{multline}

In particular, when $n=1$, one has
\beq\label{zq1} \zeta_1(t)=-q_1(x_*)/\mu_1\text{ for }t\in\R.
\eeq 

\item We explicate \ref{fin} above. One has, for each $n\ge 1$, and integers $M\ge \mu_n/\mu_1-1$, $K\ge n$, $J\ge n-1$, that 
\begin{multline}\label{sumequiv}
\sum_{\mu_k+\mu_{j_1}+\mu_{j_2}+\ldots+\mu_{j_m}=\mu_n}
\mathcal Q_{k,m}(x_*,t) (\zeta_{j_1}(t),\ldots,\zeta_{j_m}(t)) \\
=\sum_{m=0}^M \sum_{k=1}^K \sum_{\substack{j_1,\ldots,j_m=1,\\ \mu_k+\mu_{j_1}+\mu_{j_2}+\ldots+\mu_{j_m}=\mu_n}}^J 
\mathcal Q_{k,m}(x_*,t) (\zeta_{j_1}(t),\ldots,\zeta_{j_m}(t)),
\end{multline}
with the right-hand side being interpreted for $m=0$ in the same way as in \ref{xpa} above.

Indeed, the right-hand side of \eqref{sumequiv} is part of the left-hand side. Reversely, the arguments in \ref{fin} show that the left-hand side of \eqref{sumequiv} equals the right-hand side of \eqref{zep}, which, in turn, is part of the right-hand side of \eqref{sumequiv}. Hence, both sides of \eqref{sumequiv} must be the same.
\end{enumerate}

\begin{proof}[Proof of Theorem \ref{Thm1}]
The proof follows the general scheme of Foias--Saut \cite{FS87} with simplified presentation as in \cite{HM2}.
By the virtue of \eqref{er0}, it suffices to prove that
\beqs
z(t)=x(t)-x_*\sim \sum_{n=1}^\infty \zeta_n(t) e^{-\mu_n t}.
\eeqs

To this end, we will prove, by induction, that given any $N\in \N$, there exists $\varep_N>0$ such that
\beq\label{erN}
\Big|z(t)-\sum_{n=1}^N \zeta_n(t) e^{-\mu_n t}\Big|=\bigo(e^{-(\mu_N+\varep_N)t}).
\eeq

\medskip
Consider $N=1$.  We have from \eqref{ut} that
\beqs
z'(t)=x'(t)=u(x(t),t)=q_1(x(t))e^{-\mu_1 t}+\bigo(e^{-(\mu_1+\delta_1)t}). 
\eeqs

Writing $q_1(x(t))$ by \eqref{qntap} for $n=1$, $s=0$ and $\delta=\mu_1/2$, we obtain 
\beqs
z'(t)=[q_1(x_*) + \bigo(e^{-\mu_1 t/2})] e^{-\mu_1 t}  +\bigo(e^{-(\mu_1+\delta_1)t})=q_1(x_*)e^{-\mu_1 t}  +\bigo(e^{-(\mu_1+\varep_1)t}), 
\eeqs
where $\varep_1=\min\{\mu_1/2,\delta_1\}$.
Let $w_0(t)=e^{\mu_1 t} z(t)$.
Then
\beqs
w_0'(t)=\mu_1 e^{\mu_1 t} z(t)+ e^{\mu_1 t} z'(t)=\mu_1 w_0(t) + q_1(x_*) + \bigo(e^{-\varep_1 t}).
\eeqs
Thus,
\beq\label{wzero}
w_0'(t)-\mu_1 w_0(t)=q_1(x_*)+ \bigo(e^{-\varep_1 t}).
\eeq

We apply Lemma \ref{polylem} to equation \eqref{wzero}, that is, $t_*=T$, $y(t)=w_0(t)$, $\gamma=\mu_1$, $p(t)=q_1(x_*)$, $\delta=\varep_1$, and note that 
\beqs
\lim_{t\to\infty}(e^{-\gamma t} |y(t)|)=\lim_{t\to\infty} |z(t)|=0,
\eeqs
hence condition \eqref{yexpdec} is met.
It follows \eqref{gball} that 
\beq\label{wzeroap}
|w_0(t)-\zeta_1(t)|=\bigo(e^{-\varep_1 t}),
\eeq
where $\zeta_1(t)$ is the unique polynomial solution of \eqref{ze1}.
Multiplying \eqref{wzeroap} by $e^{-\mu_1 t}$, we obtain \eqref{erN} for $N=1$.

\medskip
Now, given $N\in \N$, assume \eqref{erN} holds with $\zeta_n$ being the unique polynomial solutions of \eqref{zeneq} for $n=1,\ldots, N$.
Let $z_N(t)=\sum_{n=1}^N \zeta_n(t) e^{-\mu_n t}$ and $\tilde z_N(t)=z(t)-z_N(t)$.

Since $\zeta_1(t)$ is a constant vector, see \eqref{zq1}, we have from the definition of $z_N(t)$ that 
\beq\label{zNe} 
|z_N(t)|=\bigo(e^{-\mu_1 t}).
\eeq 
Also, estimate \eqref{erN} reads as
\beq\label{zerN}
|\tilde z_N(t)|=\bigo(e^{-(\mu_N+\varep_N)t}).
\eeq

Define $w_N(t)=e^{\mu_{N+1}t}\tilde z_N(t)$. 
Taking derivative of $w_N(t)$ gives
\beq\label{wNp}
 w_N' =\mu_{N+1}w_N+ e^{\mu_{N+1}t}\Big(z'-\sum_{n=1}^N e^{-\mu_n t}(\zeta_n'-n\zeta_n)\Big).
\eeq

We will find an appropriate expansion for $z'(t)$ in \eqref{wNp}. By \eqref{ut}, we have
\beqs
z'(t)=x'(t)=u(x(t),t)
= \sum_{k=1}^{N+1} q_k(x(t),t)e^{-\mu_k t} +\bigo(e^{-(\mu_{N+1}+\delta_{N+1})t}).
\eeqs 

We make use of the approximation \eqref{qntap} for each $q_k(x(t),t)$, for $k=1,2,\ldots,N+1$, with $s=s_{N+1}$ defined in \eqref{zep}, and $\delta=\mu_1/2$. Noticing that  $\mu_1(s_{N+1}+1)\ge \mu_{N+1}$, we obtain   
\begin{align*}
z'(t)
&=\sum_{k=1}^{N+1} \sum_{m=0}^{s_{N+1}} \mathcal Q_{k,m}(x_*,t)z(t)^{(m)} e^{-\mu_k t}
+\sum_{k=1}^{N+1}\bigo(e^{-(\mu_{N+1}-\mu_1/2) t})e^{-\mu_k t} +\bigo(e^{-(\mu_{N+1}+\delta_{N+1})t}).
\end{align*}

For the middle sum on the right-hand side, we estimate $\mu_k-\mu_1/2\ge \mu_1-\mu_1/2=\mu_1/2$. Then it follows that
\beq\label{zz}
z'(t)=\sum_{k=1}^{N+1} \sum_{m=0}^{s_{N+1}} \mathcal Q_{k,m}(x_*,t)z(t)^{(m)} e^{-\mu_k t} +\bigo(e^{-(\mu_{N+1}+\widehat\delta_{N+1})t}),
\eeq 
where $\widehat\delta_{N+1}=\min\{\mu_1/2,\delta_{N+1}\}$.

Denote $Q=\mathcal Q_{k,m}(x_*,t)$ in the calculations below. For $m,k\ge 1$, we write
\beqs 
Qz(t)^{(m)}
=Q(z_N(t)+\tilde z_N(t),\ldots, z_N(t)+\tilde z_N(t))=Qz_N(t)^{(m)}+\sum Q(y_1,\ldots,y_m),
\eeqs 
where the last sum is a finite sum, and the vectors $y_1,\ldots,y_m$ belong to $\{z_N(t),\tilde z_N(t)\}$ with at least one of them being $\tilde z_N(t)$.
We estimate each $ Q(y_1,\ldots,y_m)$ by \eqref{multiL}, with $\|Q\|$ being bounded by \eqref{Qze2} for $\delta=\varep_N/2$, and 
use estimates \eqref{zNe},  \eqref{zerN} for $|z_N(t)|$, $|\tilde z_N(t)|$, respectively. We obtain
\begin{align*}
Qz(t)^{(m)}
&=Qz_N(t)^{(m)}+\bigo_m(e^{(\varep_N/2)t} \cdot e^{-(\mu_N+\varep_N)t})\\
&=Q\Big(\sum_{j_1=1}^N \zeta_{j_1}e^{-\mu_{j_1} t},\sum_{j_2=1}^N \zeta_{j_2}e^{-\mu_{j_2} t},\ldots, \sum_{j_m=1}^N \zeta_{j_m}e^{-\mu_{j_m} t}\Big)+\bigo(e^{-(\mu_N+\varep_N/2)t})\\
&=\sum_{j_1,\ldots,j_m=1}^N Q(\zeta_{j_1},\zeta_{j_2},\ldots, \zeta_{j_m}) e^{-(\mu_{j_1}+\ldots+\mu_{j_m})t}+\bigo(e^{-(\mu_N+\varep_N/2)t}). 
\end{align*}
Combining this with \eqref{zz}, we have
\begin{align*}
z'(t)&=\sum_{k=1}^{N+1} \sum_{m=0}^{s_{N+1}} \sum_{j_1,\ldots,j_m=1}^N \mathcal Q_{k,m}(x_*,t)(\zeta_{j_1},\ldots,\zeta_{j_m}) e^{-(\mu_k+\mu_{j_1}+\ldots+\mu_{j_m})t}\\
&\quad +\sum_{k=1}^{N+1} \sum_{m=1}^{s_{N+1}}(e^{-(\mu_N+\varep_N/2)t}))e^{-\mu_k t}+\bigo(e^{-(\mu_{N+1}+\widehat\delta_{N+1})t}).
\end{align*}

In the middle terms on the right-hand side,  the number $\mu_N+\mu_k$ is in $\mathcal S$, which is due to \eqref{semiS}, greater than $\mu_N$,  and hence, greater or equal to $\mu_{N+1}$.
Therefore,
\beq\label{zpo}
z'(t)=\sum_{k=1}^{N+1} \sum_{m=0}^{s_{N+1}} \sum_{j_1,\ldots,j_m=1}^N \mathcal Q_{k,m}(x_*,t)(\zeta_{j_1},\ldots,\zeta_{j_m}) e^{-(\mu_k+\mu_{j_1}+\ldots+\mu_{j_m})t}+\bigo(e^{-(\mu_{N+1}+\varep'_{N+1})t}),
\eeq
where $\varep'_{N+1}=\min\{\varep_N/2,\widehat\delta_{N+1}\}$.

Note that each $\mathcal Q_{k,m}(x_*,t) (\zeta_{j_1}(t),\ldots,\zeta_{j_m}(t))$ is a $\R^3$-valued polynomial, hence
\beq\label{Qze1}
|\mathcal Q_{k,m}(x_*,t) (\zeta_{j_1}(t),\ldots,\zeta_{j_m}(t))|=\bigo(e^{\delta t})\quad \forall\delta>0,
\eeq

For the first sums on the right-hand side of \eqref{zpo}, one has, thanks to \eqref{semiS}, $\mu_k+\mu_{j_1}+\ldots+\mu_{j_m}\in \mathcal S$, hence there is a unique $n\in\N$ such that 
\beqs
\mu_n=\mu_k+\mu_{j_1}+\ldots+\mu_{j_m}.
\eeqs 

We split the sums into two parts: $I_1(t)$ corresponds to $n\le N+1$, and $I_2(t)$ corresponds to $n\ge N+2$. Clearly, by using the estimate \eqref{Qze1}, we have $I_2(t)=\bigo(e^{-(\mu_{N+2}-\delta)t})$, for any $\delta>0$.
Taking $\delta=\widehat\varep_{N+1}\eqdef (\mu_{N+2}-\mu_{N+1})/2$ gives 
\beqs
I_2(t)=\bigo(e^{-(\mu_{N+1}+\widehat\varep_{N+1})t})=\bigo(e^{-(\mu_{N+1}+\varep_{N+1})t}),
\eeqs
where $\varep_{N+1}=\min\{\varep'_{N+1},\widehat\varep_{N+1}\}$.
We also rewrite
$I_1(t)=\sum_{n=1}^{N+1} J_n(t) e^{-\mu_n t}$,
where 
\beqs
J_n(t)=\sum_{k=1}^{N+1}\sum_{m=0}^{s_{N+1}} \sum_{\substack{j_1,\ldots,j_m=1,\\ \mu_k+\mu_{j_1}+\mu_{j_2}+\ldots+\mu_{j_m}=\mu_n}}^N \mathcal Q_{k,m}(x_*,t) (\zeta_{j_1}(t),\ldots,\zeta_{j_m}(t)), \quad \text{for }n=1,\ldots,N+1 .
\eeqs

We obtain
\beq\label{zp}
z'(t)=\sum_{n=1}^{N+1} J_n(t) e^{-\mu_n t}+\bigo(e^{-(\mu_{N+1}+\varep_{N+1})t}).
\eeq

Combining \eqref{wNp} with \eqref{zp} gives
\beq\label{wN0}
 w_N' 
 =\mu_{N+1}w_N +e^{\mu_{N+1}t}\sum_{n=1}^N e^{-\mu_n t}\Big\{ J_n- (\zeta_n'-n\zeta)\Big\} +J_{N+1} +\bigo(e^{-\varep_{N+1} t}).
\eeq 

For $n=1,\ldots,N+1$, applying formula \eqref{sumequiv} to $M=s_{N+1}\ge \mu_{N+1}/\mu_1-1\ge \mu_n/\mu_1-1$, $K=N+1\ge n$, and $J=N\ge n-1$, we have
\beq\label{Jn}
J_n(t)=\sum_{\mu_k+\mu_{j_1}+\mu_{j_2}+\ldots+\mu_{j_m}=\mu_n} \mathcal Q_{k,m}(x_*,t) (\zeta_{j_1}(t),\ldots,\zeta_{j_m}(t)) .
\eeq

For $n=1,\ldots, N$, it follows relation \eqref{Jn} and equation \eqref{zeneq} that  $J_n-(\zeta_n'-n\zeta)=0$. Thus, equation \eqref{wN0} yields
\beq\label{wNeq}
 w_N' -\mu_{N+1}w_N 
 =J_{N+1} +\bigo(e^{-\varep_{N+1} t}).
\eeq

We apply Lemma  \ref{polylem} to equation \eqref{wNeq}, i.e., $t_*=T$, $y(t)=w_N(t)$, $\gamma=\mu_{N+1}$, 
$$p(t)= J_{N+1}(t)=\sum_{\mu_k+\mu_{j_1}+\mu_{j_2}+\ldots+\mu_{j_m}=\mu_{N+1}} \mathcal Q_{k,m}(x_*,t) (\zeta_{j_1}(t),\ldots,\zeta_{j_m}(t)),$$ 
and $\delta=\varep_{N+1}$. 
Note that 
\beqs
\lim_{t\to\infty} (e^{-\gamma t}|y(t)|)=\lim_{t\to\infty}|\tilde z_N(t)|=0,
\eeqs
which verifies condition \eqref{yexpdec}.
Then one has 
\beq\label{wN1}
\Big|w_N(t)-\zeta_{N+1}(t)\Big|=\bigo(e^{-\varep_{N+1} t}),
\eeq
where $\zeta_{N+1}:\R\to\R^3$ is the unique polynomial solution of equation \eqref{zeneq} for $n=N+1$.

Multiplying \eqref{wN1} by $e^{-\mu_{N+1}t}$ gives
\beqs
\Big|\tilde z_N(t)-\zeta_{N+1}(t)e^{-\mu_{N+1} t}\Big|=\bigo(e^{-(\mu_{N+1}+\varep_{N+1}) t}),
\eeqs
which proves \eqref{erN} for $N:=N+1$. 

\medskip
By the induction principle, \eqref{erN} holds for all integers $N\in \N$.
The proof is complete.
\end{proof}

Next is the result corresponding to Consideration II.

\begin{theorem}\label{Thm3}
 Under Consideration II, one has
 \beq\label{zc2}
 |x(t)-x_*|=\bigo(e^{-\mu t}) \text{ for all } \mu>0.
 \eeq
\end{theorem}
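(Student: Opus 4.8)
The plan is to exploit the no-slip boundary condition to run a bootstrap that improves the decay rate of $z(t)=x(t)-x_*$ by a fixed positive amount at each step. The key initial observation is that in the (DBC) case every coefficient $q_{n,k}$ lies in $\mathcal X$ and hence vanishes on $\partial\Omega$; since Consideration II places $x_*\in\partial\Omega$, this gives $q_n(x_*,t)=\sum_{k=0}^{d_n} t^k q_{n,k}(x_*)=0$ for all $n$ and all $t$, i.e. $\mathcal Q_{n,0}(x_*,t)=0$. This is precisely what distinguishes the present case from Consideration I: the constant ($m=0$) term in the Taylor expansion \eqref{qntap} drops out, so along the trajectory $q_n(x(t),t)$ is of the order of $|z(t)|$ rather than $\bigo(1)$.

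Concretely, I would induct on a decay exponent $\beta$, starting from $\beta=\mu_1$ supplied by Proposition \ref{yzerolem}. Assuming $|z(t)|=\bigo(e^{-\beta t})$, expand $z'(t)=u(x(t),t)$ through \eqref{ut} with $N$ chosen so large that $\mu_N+\delta_N\ge \beta+\mu_1$, and expand each $q_k(x(t),t)$, for $1\le k\le N$, through \eqref{qntap} with Taylor order $s$ chosen so large that $\mu_1(s+1)\ge \beta+\mu_1$. Because the $m=0$ term now vanishes, every surviving summand $\mathcal Q_{k,m}(x_*,t)z(t)^{(m)}$ has $m\ge 1$; estimating it by \eqref{multiL}, the polynomial bound \eqref{Qze2}, and the inductive hypothesis yields $\bigo(e^{-(m\beta-\delta)t})$ for each $\delta>0$, with the slowest rate $\bigo(e^{-(\beta-\delta)t})$ coming from $m=1$ (the terms with $m\ge 2$ decay strictly faster since $\beta\ge\mu_1>0$). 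Hence $q_k(x(t),t)=\bigo(e^{-(\beta-\delta)t})$, and multiplying by $e^{-\mu_k t}$ and summing over $k$ — the dominant contribution being $k=1$ — produces
\beqs
z'(t)=\bigo(e^{-(\beta+\mu_1-\delta)t})\quad\forall\delta>0.
\eeqs

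Integrating this identity from $t$ to $\infty$, using $z(t)\to 0$ exactly as in \eqref{xstar}, upgrades the bound to $|z(t)|=\bigo(e^{-(\beta+\mu_1-\delta)t})$. Choosing $\delta=\mu_1/2$, the exponent improves from $\beta$ to $\beta+\mu_1/2$ at every step, so beginning at $\mu_1$ the attainable rate after $j$ iterations is $\mu_1(1+j/2)\to\infty$. Given any $\mu>0$, finitely many steps then yield $|x(t)-x_*|=\bigo(e^{-\mu t})$, which is \eqref{zc2}. The only point needing care — the main, if mild, obstacle — is the bookkeeping ensuring that at each step both truncation remainders (from the finite sum over $k\le N$ and from the Taylor order $s$) decay strictly faster than the target $e^{-(\beta+\mu_1)t}$; this is arranged by taking $N$ and $s$ large depending on the current $\beta$, and the higher-order terms $m\ge 2$ only assist, decaying faster than the linear one.
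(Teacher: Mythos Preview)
Your bootstrap argument is correct and gives a genuinely different proof from the paper's. The paper does \emph{not} iterate on the decay exponent; instead it fixes an integer $N_*>1$, reruns the entire proof of Theorem~\ref{Thm1} up to level $N_*$ to obtain the finite expansion
\[
\Big|z(t)-\sum_{n=1}^{N_*}\zeta_n(t)e^{-\mu_n t}\Big|=\bigo(e^{-(\mu_{N_*}+\varep_{N_*})t}),
\]
and then observes that, since $q_n(x_*,t)=0$ for all $n$, the recursive formula \eqref{zenx} forces $\zeta_1=\zeta_2=\cdots=\zeta_{N_*}=0$. Letting $N_*\to\infty$ finishes. Your route avoids Lemma~\ref{polylem} and the construction of the $\zeta_n$'s altogether, trading that machinery for an elementary ``integrate $z'$ from $t$ to $\infty$'' step that gains $\mu_1/2$ in the exponent each pass; the paper's route is shorter on the page precisely because it recycles Theorem~\ref{Thm1} wholesale.

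One technical point you should flag: the Taylor expansion \eqref{qTay}, and hence \eqref{qntap}, was derived under Consideration~I where $x_*$ is an interior point of the domain on which $q_{n,k}$ is defined. In Consideration~II, $x_*\in\partial\Omega$ and $q_{n,k}$ lives on $\bar\Omega$, so a full-neighborhood Taylor expansion is not immediately available. The paper handles this by first extending each $q_{n,k}$ to some $Q_{n,k}\in C_c^{s+1}(\R^3)^3$; you need the same device (or an equivalent one-sided Taylor argument using that $x(t)\in\Omega$) before invoking \eqref{qntap}. This is a routine fix, not a gap in the idea.
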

\begin{proof}
Let $N_*>1$ be any integer, denote $s_*=s_{N_*}\in\N$ which is defined in \eqref{zep}. 
For $1\le n\le N_*$ and $0\le k\le d_n$, there exists an extension $Q_{n,k}\in C_c^{s_*+1}(\R^3)^3$ such that
$q_{n,k}=Q_{n,k}\big|_{\bar\Omega}$. Define $Q_n(x,t)=\sum_{k=0}^{d_n} t^k Q_{n,k}(x)$. 
Then \eqref{qTay} and  \eqref{Qnm} hold true with $D_x^m q_{n,k}$ being replaced with $D_x^m Q_{n,k}$, and $D_x^m q_n(x,t)$ with $D_x^m Q_n(x,t)$.
Repeat the above proof of Theorem \ref{Thm1} with finite induction, we assert that \eqref{erN} holds for $N=N_*$, i.e.,
 \beq\label{zlast}
 |z(t)-\sum_{n=1}^{N_*} \zeta_n(t)e^{-\mu_n t}|=\bigo(e^{-(\mu_{N_*}+\varep_{N_*}) t}).
 \eeq
 
Since $x_*\in\partial\Omega$ and $q_{n,k}\in\mathcal X$, one has $q_{n,k}(x_*)=0$, hence $q_n(x_*,t)=0$ for $1\le n\le N_*$. 
By \eqref{zq1}, $\zeta_1=0$. One can verify by the recursive formula \eqref{zenx} that $\zeta_n=0$ for $n=1,\ldots, N_*$. 
Combining this with \eqref{zlast} yields 
 \beqs
 |z(t)|=\bigo(e^{-\mu_{N_*} t}) \text{ for any integer } N_*>1.
 \eeqs
By letting $N_*\to\infty$ and using property \eqref{limu}, we obtain \eqref{zc2}.
 \end{proof}

\begin{remark}\label{inrmk}
This is a discussion about assumption (A).

In the (SPC) case, by its regularity and spatial periodicity, $u$ is bounded on $\R^3\times[T,T']$ for any $T'>T$.
Then, given any $x_0\in\R^3$, there is a unique solution $x(t)\in C^1([T,\infty),\R^3)$ of \eqref{xuode} with $x(T)=x_0$. 

In the (DBC) case, given any $x_0\in\Omega$, there exists a unique solution $x(t)\in C^1([T,T_{\rm max}),\Omega)$ of \eqref{xuode} with $x(T)=x_0$, where $[T,T_{\rm max})$ is the maximal interval of existence. In case $T_{\rm max}<\infty$,  we have, by the boundedness of $u$ on $\Omega\times [T,T_{\max })$, 
\beqs
\lim_{t\to T_{\rm max}^-} x(t)=\bar x\eqdef x(T)+\int_T^{T_{\rm max}} u(x(\tau),\tau)\d\tau,
\eeqs
which exists and must belong to $\partial \Omega$. Then $u(\bar x,t)=0$, and by defining $x(t)=\bar x$ for $t\ge T_{\rm max}$ we obtain a solution $x(t)\in \bar \Omega$ of \eqref{xuode} for all $t\ge T$. However, its long-time behavior is not interesting. Therefore, in assumption (A) above, we only consider $T_{\rm max}=\infty$, that is, $x(t)\in\Omega$ for all $t\ge T$. 
\end{remark}

\section{General spatial periodicity case}\label{sec3}
We consider the general case of (SPC), when the velocity field is not required to have zero average over $\Omega$.

Let $u(x,t)\in C_{x,t}^{2,1}(\R^3\times (0,\infty))\cap C(\R^3\times[0,\infty))$ and $p(x,t)\in C_{x}^{1}(\R^3\times (0,\infty))$
be $\mathbf L$-periodic  functions that form a solution $(u,p)$ of the NSE \eqref{NPDE}.

Let   $x(t)\in\R^3$ be a solution of \eqref{xuode} on $(0,\infty)$.
The next theorem shows that $x(t)$ has a similar asymptotic expansion to \eqref{xex} in Theorem \ref{Thm1}.

\begin{theorem}\label{Thm2}
There exist $x_*\in\R^3$ and polynomials $X_n:\R\to\R^3$, for $n\in\N$, such that
\beq\label{xnew}
x(t)\sim (x_*+U_0 t) +\sum_{n=1}^\infty X_n(t)e^{-\mu_n t} \text{ in }\R^3,
\eeq
where  $U_0=(L_1L_2L_3)^{-1}\int_{\Omega} u(x,0)\d x$.

\end{theorem}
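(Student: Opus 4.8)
The plan is to reduce Theorem \ref{Thm2} to the already-established Theorem \ref{Thm1} by passing to a uniformly translating frame in which the velocity field acquires zero spatial average. The first step is to observe that the spatial average of $u$ over one period is conserved in time. Integrating the momentum equation in \eqref{NPDE} over $\Omega=(0,L_1)\times(0,L_2)\times(0,L_3)$ and using $\mathbf L$-periodicity, each of $\int_\Omega \Delta u\,\d x$, $\int_\Omega\nabla p\,\d x$, and $\int_\Omega (u\cdot\nabla)u\,\d x$ vanishes (the last one after rewriting $(u\cdot\nabla)u={\rm div}\,(u\otimes u)$ by means of ${\rm div}\,u=0$ and invoking periodicity on $\partial\Omega$). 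Hence $\ddt\int_\Omega u(x,t)\,\d x=0$, so $(L_1L_2L_3)^{-1}\int_\Omega u(x,t)\,\d x=U_0$ for all $t$, and $v\eqdef u-U_0$ is an $\mathbf L$-periodic, divergence-free field with zero average over $\Omega$ at every time.

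Next I would perform the Galilean change of variables $y=x-U_0t$ and set $w(y,t)\eqdef v(y+U_0t,t)$ together with $\tilde p(y,t)\eqdef p(y+U_0t,t)$. Since $U_0$ is a constant vector, substituting $u=U_0+v$ into \eqref{NPDE} yields for $v$ the equation $v_t-\nu\Delta v+(U_0\cdot\nabla)v+(v\cdot\nabla)v=-\nabla p$, while the chain rule gives $\partial_t w=\big(v_t+(U_0\cdot\nabla)v\big)(y+U_0t,t)$ and $\nabla_y w=(\nabla_x v)(y+U_0t,t)$. The transport term is thereby absorbed, and $w$ solves the genuine Navier--Stokes system $\partial_t w-\nu\Delta w+(w\cdot\nabla)w=-\nabla\tilde p$ in the $y$ variable. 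One checks directly that $w$ remains $\mathbf L$-periodic and retains zero average over $\Omega$, since the shift $y\mapsto y+U_0t$ preserves integrals over a full period. Thus $w$ is precisely a solution of the (SPC) case with zero average, to which the Foias--Saut expansion \eqref{expand} applies, with the very exponents $\mu_n$ coming from the Stokes spectrum on the periodic zero-average space.

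The final step transfers the expansion to the trajectory. Writing $\xi(t)\eqdef x(t)-U_0t$, one computes $\xi'(t)=x'(t)-U_0=v(x(t),t)=w(\xi(t),t)$, so $\xi$ is exactly a Lagrangian trajectory of the velocity field $w$. Applying Proposition \ref{yzerolem} and Theorem \ref{Thm1} to $w$ and $\xi$ (the (SPC) case falls under Consideration I), one obtains $x_*\in\R^3$ and polynomials $X_n\eqdef\zeta_n$ with $\xi(t)\sim x_*+\sum_{n=1}^\infty X_n(t)e^{-\mu_n t}$ in $\R^3$; adding back $U_0t$ gives \eqref{xnew}.

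The main obstacle is not any single delicate estimate but rather the careful verification that the moving-frame field $w$ genuinely satisfies the standard zero-average periodic NSE, that is, that the added transport term $(U_0\cdot\nabla)v$ is exactly cancelled by the time derivative in the new frame, and that both the periodicity and the zero-average property survive the translation $y=x-U_0t$. Once this Galilean reduction is in place, everything else is an immediate application of Theorem \ref{Thm1}, requiring no new analysis.
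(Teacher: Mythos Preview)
Your proposal is correct and follows essentially the same approach as the paper: the Galilean transformation $\xi(t)=x(t)-U_0t$, $w(y,t)=u(y+U_0t,t)-U_0$, reducing to the zero-average (SPC) case and invoking Theorem \ref{Thm1}. The paper's proof is terser, merely asserting that the shifted field is $\mathbf L$-periodic with zero average and solves the NSE, whereas you take the care to justify conservation of the spatial mean and the cancellation of the transport term; but the underlying argument is identical.
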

\begin{proof}
We use the standard Galilean transformation. Set 
\beqs
v(X,t)=u(X+U_0 t,t)-U_0\text{ and } P(X,t) = p(X + U_0 t,t),\quad X\in\R^3, \ t\ge 0.
\eeqs 

One can verify that  $(v,P)$ is a classical solution of the NSE \eqref{NPDE} on $\R^3\times (0,\infty)$.
Moreover,  $(v,P)$ is $\mathbf L$-periodic, and $v(\cdot,t)$ has zero average for each $t\ge 0$. 

Let $X(t)=x(t)-U_0t$. We have
\beqs
X'(t)=x'(t)-U_0=u(x(t),t)-U_0=v(x(t)-U_0 t,t)+U_0-U_0= v(X(t),t).
\eeqs

Applying Theorem \ref{Thm1} to $v(X,t)$ and $X(t)$ yields
\beqs
X(t)\sim x_*+\sum_{n=1}^\infty X_n(t)e^{-\mu_n t},
\eeqs 
where $x_*$ is a constant vector in $\R^3$, and $X_n$'s are $\R^3$-valued polynomials on $\R$.
Consequently, we obtain
\beqs
x(t)=X(t)+U_0 t\sim x_*+U_0t +\sum_{n=1}^\infty X_n(t)e^{-\mu_n t},
\eeqs
which proves \eqref{xnew}.
\end{proof}

\begin{remark}
We end the paper with the following comments.

\begin{enumerate}[label=\rm (\alph*)]
\item The asymptotic expansion is uniquely determined for each given Leray--Hopf weak solution, but does not imply the uniqueness of the  Leray--Hopf weak solutions starting from the same initial condition.

\item The expansion of $x(t)$ in Theorem \ref{Thm1} is not driven by a dissipative ODE, but rather by the exponential decay in the time-dependent expansion \eqref{expand} of $u(x,t)$.
This is different from the previous results obtained for the NSE \cite{FS87,HM2,CaH1,CaH2,HTi1} or general nonlinear differential equations considered in \cite{Shi2000,CaH3,Minea}. 

\item The above proof of Theorem \ref{Thm1} can be adapted to study the Navier--Stokes equations in different contexts when $u(x,t)$ processes an asymptotic expansion similar to \eqref{expand} such as those in \cite{HM2,CaH1,HTi1}.

\item Although the above presentation is focused on the three-dimensional space, the  obtained results are equally true for the two-dimensional case.  Moreover, Theorem \ref{Thm1} is not restricted to just the velocity field and space $\R^3$, but in fact, applies to general differential equations in Banach spaces. 
\end{enumerate}
\end{remark}

%\bibliography{paperbaseall}{}
%\bibliographystyle{abbrv}
\def\cprime{$'$}

\end{document}